\numberwithin{equation}{section}
\theoremstyle{plain}
\newtheorem{theo}{Theorem}[section]
\newtheorem{coro}[theo]{Corollary}
\newtheorem{lem}[theo]{Lemma}
\newtheorem{prop}[theo]{Proposition}
\theoremstyle{definition}
\newtheorem{defi}{Definition}[section]
\theoremstyle{remark}
\newtheorem{rem}{Remark}[section]
\newcommand{\ii}{\infty}
\renewcommand{\v}{\varphi}
 \newcommand{\al}{\alpha}
 \newcommand{\be}{\beta}
\newcommand{\la}{\lambda}
\newcommand{\si}{\sigma}
\newcommand{\st}{such that }
\newcommand{\Th}{Theorem }
\newcommand{\f}[2]{\frac{#1}{#2}}
\renewcommand{\d}{\partial}
\newcommand{\R}{\mathbb{R}}
\newcommand{\h}{\hspace*{5mm}}
\renewcommand{\i}{{\rm i}}
\newcommand{\de}{Definition }
\newcommand{\fu}{function }
\newcommand{\fus}{functions }
\newcommand{\pb}{problem }
\newcommand{\pr}{Proposition }
\newcommand{\so}{solution }
\newcommand{\sos}{solutions }
\renewcommand{\th}{theorem }
\renewcommand{\t}[1]{\ \mbox{ #1}\ }
\newcommand{\rn}{\!\!}
\newcommand{\vp}{,\h \forall }
\newcommand{\dif}{\,{\rm d}}
\newcommand{\ddt}{\f{{\rm d}}{{\rm d}t}}
\newcommand{\ld}{L^2(\rd)}
\newcommand{\lm}{L^2(X, \mu)}
\newcommand{\n}[1]{\| #1 \|}
\def\db{{\bf D}_{0, t}}
\def\rd{{\mathbb R^d}}
\renewcommand{\d}{{\bf D}^\alpha_{-\ii, t}}
\newcommand{\ds}{{\bf D}^\alpha_{-\ii, \si}}
\newcommand{\dm}{{\bf D}^\alpha_{t, \ii}}
\newcommand{\dms}{{\bf D}^\alpha_{\si, \ii}}
\newcommand{\C}{\mathbb{C}}
\def\F{\mathcal F}
\renewcommand{\L}{\mathcal{L}}
\newcommand{\U}{\mathcal{U}}
\def\S{\mathcal S}
\def\dt{{\, \mathrm d}t}
\def\dy{{\mathrm d}y}
\begin{document}
\title[]{Time Fractional Schrödinger Equation}
\author{Hassan  Emamirad}
\author{Arnaud Rougirel}
\thanks{This research was in part supported by a grant from IPM \# 91470221}

\address{Hassan  Emamirad\newline
School of Mathematics, Institute for Research in Fundamental Sciences (IPM), P.O. Box 19395-5746, Tehran, Iran\newline
Laboratoire de  Math\'{e}matiques, Universit\'{e} de Poitiers \& CNRS. Téléport  2, BP 179, 86960 Chassneuil du Poitou Cedex, France}
\email{emamirad@ipm.ir}
\email{emamirad@math.univ-poitiers.fr}

\address{Arnaud Rougirel\newline
Laboratoire de  Math\'{e}matiques, Universit\'{e} de Poitiers \& CNRS. Téléport  2, BP 179, 86960 Chassneuil du Poitou Cedex, France}

\email{rougirel@math.univ-poitiers.fr}


 \date{\today}

\begin{abstract}
We propose a time fractional extension of the Schrödinger equation that keeps the main mechanical and quantum properties of the classical Schrödinger equation. This extension is shown to be equivalent to another well identified time first order PDE with fractional hamiltonian.    
\end{abstract}
\maketitle
\thispagestyle{empty}
\section{Introduction}
We would like to address the issue of what are suitable time fractional extensions of the \emph{Schrödinger equation}. Our approach consists in, first, selecting some (important) properties of the Schrödinger equation; secondly, finding a time fractional equation which conserves these properties.

We have selected two properties: (i) the conservation of the $L^2$-norm of wave functions. That point is central w.r.t. the probabilistic character of quantum mechanics objects.  (ii) The dynamics of the Schrödinger equation, i.e.\! the time reversibility of \sos which is related to the fact that the related \emph{solution operator} is a group. Indeed, in any reasonable mechanical theory, 
trajectories of autonomous systems must be described by equations generating a 
group.

It is clear that the use of time fractional operators as $\db^\al$, do not allow to match the above requirements. We refer to Remark \ref{r4.1} below for details, 
and to \cite{Die} or \cite{SKM} for more information on fractional derivatives. Regarding the dynamics of time fractional equations, we refer to \cite{ER}. 

In this paper, the real number $\al$ is always supposed to range between $0$ and $1$.
 
In order to recover the above properties, we consider time fractional operators with lower bound $-\ii$, i.e. $\d$. More precisely, if, for simplicity, we restrict our attention to the Hamiltonian $\hat{H}:=-\Delta$, then we 
will show that the \pb
\begin{equation} \label{1.2}
    \d u = -\i^\al\Delta u,\h u(0)=v,
\end{equation}
admits a unique \so $u$ in some suitable function space. \Th \ref{t4.1} states 
that the \so to \eqref{1.2} is given by 
$$
    u(t)= {\rm e}^{\i t (-\Delta)^{1/\al}} v,
$$
hence the $L^2$-norm of $u$ is conserved and the \so operator of \eqref{1.2} is 
a group. Then, according to Corollary \ref{c4.1}, \eqref{1.2} is equivalent to
\begin{equation} \label{1.4}
    \ddt u = \i(-\Delta)^{1/\al} u,\h u(0)=v.
\end{equation}

The coefficient $\i^\al$ in \eqref{1.2} is not surprising since we know from the work of Naber, in \cite{Na}, that the operator $\db + \i\Delta$ has a parabolic 
behavior (see  Remark \ref{r4.1}).

In this paper, we consider more generally abstract \emph{time fractional Schrödinger equations} of the form
\begin{equation} \label{1.6}
    \ddt u = \i^\al A u,
\end{equation}
where $A$ is a positive self-adjoint operator.

The outline is as follows. The forcoming section is dedicated to preliminaries regarding fractional derivatives, in particular time fractional \emph{weak} derivatives. In section 3, we solve \eqref{1.6} in the case where the underlying Hilbert space in $\R$. That turns out to be the corner stone of our work since, in section 4, we solve \eqref{1.6} by diagonalisation using \emph{spectral theory}.   
\section{Fractional derivatives with lower bound $-\ii$}\label{s2}
We start with the convolution of \fus defined on $\R$, with the 
\emph{fractional kernel}, whose definition is as follow.
%
%
\begin{defi}\label{d2.2} For $\be\in(0, \ii)$, let us denote by $g_\be$ the \fu of $L^1_{loc}([0,\ii))$ defined for a.e. $t>0$ by
$$
        g_\be(t) = \f{1}{\Gamma(\be)}t^{\be-1}.
$$
\end{defi}
%
%
%
Let $X$ be a complex Banach space with norm $\n{\cdot}$. 
%
%
\begin{prop}  \label{p2.1} Let $\al\in (0, 1)$ and 
$u\in L^1(\R; X)\cap L^\ii(\R; X)$. Then, for all $t\in\R$, the \fu
$$
    (-\ii, t)\to X,\h y \mapsto g_{\al}(t-y)u(y)
$$
is integrable on $(-\ii, t)$ and
$$
    \sup_{t\in\R}\int_{-\ii}^t g_{\al}(t-y)\n{u(y)} \dy <\ii.
$$
\end{prop}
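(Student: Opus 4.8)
The plan is to prove the single uniform estimate
$$
\sup_{t\in\R}\int_{-\ii}^t g_\al(t-y)\n{u(y)}\dy <\ii,
$$
since its finiteness simultaneously delivers the supremum bound and, together with a measurability remark, the Bochner integrability of $y\mapsto g_\al(t-y)u(y)$ for each fixed $t$. The key structural observation is that the kernel $g_\al(t-y)=\f{1}{\Gamma(\al)}(t-y)^{\al-1}$ is \emph{not} integrable on $(-\ii,t)$ by itself: near $y=t$ it has a singularity which is integrable precisely because $\al-1>-1$, whereas as $y\to-\ii$ it decays only like $|y|^{\al-1}$, too slowly to be integrable. The two regions must therefore be controlled by the two different hypotheses on $u$, and I would decouple them by splitting the integral at $y=t-1$ (any fixed positive length works).

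On the near region $(t-1,t)$ one has $0<t-y<1$, so I would discard the (large) kernel factor only after integrating it explicitly, bounding $\n{u(y)}$ by $\n{u}_{L^\ii}$:
$$
\int_{t-1}^t g_\al(t-y)\n{u(y)}\dy \le \n{u}_{L^\ii}\int_0^1 g_\al(s)\dif s = \f{\n{u}_{L^\ii}}{\Gamma(\al)}\int_0^1 s^{\al-1}\dif s = \f{\n{u}_{L^\ii}}{\Gamma(\al+1)},
$$
which is finite exactly because $\al>0$. On the far region $(-\ii,t-1)$ one has $t-y\ge 1$ and $\al-1<0$, hence $(t-y)^{\al-1}\le 1$, so the kernel is bounded by the constant $\f{1}{\Gamma(\al)}$ and the remaining mass is controlled by the $L^1$ hypothesis:
$$
\int_{-\ii}^{t-1} g_\al(t-y)\n{u(y)}\dy \le \f{1}{\Gamma(\al)}\int_{-\ii}^{t-1}\n{u(y)}\dy \le \f{\n{u}_{L^1}}{\Gamma(\al)}.
$$

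Adding the two contributions gives the bound $\f{\n{u}_{L^\ii}}{\Gamma(\al+1)}+\f{\n{u}_{L^1}}{\Gamma(\al)}$, which is manifestly independent of $t$; taking the supremum yields the second assertion. For the first assertion I would note that $y\mapsto g_\al(t-y)u(y)$ is strongly measurable on $(-\ii,t)$, being the product of the scalar function $g_\al(t-\cdot)$, continuous on $(-\ii,t)$, with the strongly measurable $u$; since its norm is integrable by the estimate just obtained, the Bochner criterion gives integrability. The only genuine (and mild) subtlety is the recognition that neither hypothesis alone suffices and that the split point is what separates the singularity from the tail; the uniformity in $t$ then comes for free, as both bounds are translation invariant.
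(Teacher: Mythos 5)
Your proof is correct and follows essentially the same route as the paper: split the integral at $y=t-1$, control the singular near piece by $\n{u}_{L^\ii}\,g_{\al+1}(1)$ and the tail by $g_\al(1)\,\n{u}_{L^1}$, yielding the same $t$-independent bound. The added remark on strong measurability for the Bochner criterion is a harmless (and welcome) elaboration the paper leaves implicit.
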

%
%
\begin{proof}
For all $t\in\R$, we have
\begin{align*}  
    \int_{-\ii}^t g_{\al}(t-y)\n{u(y)} \dy  &\le
    \int_{-\ii}^{t-1}g_{\al}(t-y)\n{u(y)} \dy
    +\int_{t-1}^{t}g_{\al}(t-y)\n{u(y)} \dy \\
     & \le g_{\al}(1)\n{u}_{L^1(\R; X)}+ g_{\al+1}(1)\n{u}_{L^\ii(\R; X)}.
\end{align*}
\end{proof}
Under the assumptions and notation of \pr \ref{p2.1}, we put, for all $t\in\R$,
$$
      g_{\al}* u(t):= \int_{-\ii}^t g_{\al}(t-y)u(y) \dy
                    = \int_{0}^\ii g_{\al}(y)u(t-y) \dy.
$$
Also, we define
$$
      g_{\al}*' u(t):= \int^{\ii}_t g_{\al}(y-t)u(y) \dy
                    = \int_{0}^\ii g_{\al}(y)u(t+y) \dy.
$$
In some sense, these convolutions are adjoints. More precisely, we have the following result, whose easy proof is left to the reader.

%
%
\begin{prop}  \label{p2.2} Let $\al\in (0, 1)$,  
$u\in L^1(\R; X)\cap L^\ii(\R; X)$ and $\psi\in L^1(\R)\cap L^\ii(\R)$. Then
\begin{equation} \label{2.4}
    \int_\R g_\al * u(t)\, \psi(t)\dt =\int_\R u(t)\, g_\al *' \psi(t)\dt.     	
\end{equation}
\end{prop}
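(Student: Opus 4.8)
The plan is to verify the Fubini-type identity \eqref{2.4} by writing each side as a double integral and interchanging the order of integration. First I would substitute the definition
$$
g_\al * u(t) = \int_{-\ii}^t g_\al(t-y)\,u(y)\dy
$$
into the left-hand side, obtaining the iterated integral
$$
\int_\R\!\Big(\int_{-\ii}^t g_\al(t-y)\,u(y)\dy\Big)\psi(t)\dt
= \iint_{\{y<t\}} g_\al(t-y)\,u(y)\,\psi(t)\dy\dt.
$$
Symmetrically, expanding the right-hand side using $g_\al*'\psi(t)=\int_t^\ii g_\al(y-t)\psi(y)\dy$ gives, after renaming the dummy variables,
$$
\int_\R u(t)\Big(\int_t^\ii g_\al(y-t)\psi(y)\dy\Big)\dt
= \iint_{\{t<y\}} g_\al(y-t)\,u(t)\,\psi(y)\dt\dy.
$$
The two double integrals are over the same region $\{(s,r):s<r\}$ of $\R^2$ and have the same integrand once the roles of the variables are matched, so they coincide. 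The whole identity thus reduces to a single application of Fubini's theorem.

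The one genuine point to check is that Fubini applies, i.e. that the integrand is absolutely integrable on the relevant half-plane. This is exactly where the hypotheses $u\in L^1(\R;X)\cap L^\ii(\R;X)$ and $\psi\in L^1(\R)\cap L^\ii(\R)$ enter, and where Proposition \ref{p2.1} does the work. I would estimate
$$
\iint_{\{y<t\}} g_\al(t-y)\,\n{u(y)}\,|\psi(t)|\dy\dt
= \int_\R \Big(\int_{-\ii}^t g_\al(t-y)\n{u(y)}\dy\Big)|\psi(t)|\dt,
$$
and bound the inner integral uniformly in $t$ by the constant furnished by Proposition \ref{p2.1}; multiplying by $\psi\in L^1(\R)$ then yields a finite value. (Since $g_\al\ge0$ on $(0,\ii)$ there is no cancellation to worry about inside the kernel.) The vector-valued nature of $u$ causes no difficulty: one simply carries the norm $\n{\cdot}$ through the scalar domination estimate, and the identity \eqref{2.4} itself is an equality in $X$ that follows from the scalar Fubini theorem applied after pairing with an arbitrary continuous linear functional, or directly from the Bochner version of Fubini.

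I do not expect any real obstacle here; the statement is flagged as having an ``easy proof left to the reader,'' and indeed the only substantive ingredient is the finiteness supplied by Proposition \ref{p2.1}, which guarantees the absolute integrability needed to legitimately swap the order of integration. The mild bookkeeping is keeping track of the two descriptions of the integration region—$\{y<t\}$ from the lower-bound convolution $g_\al*u$ and $\{t<y\}$ from the adjoint convolution $g_\al*'\psi$—and confirming that the change of variables $(y,t)\mapsto(t,y)$ carries one onto the other while preserving the integrand, which is immediate from the symmetry $g_\al(t-y)$ versus $g_\al(y-t)$ across the two convolutions.
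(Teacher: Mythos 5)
Your proof is correct: the paper leaves this proof to the reader, and the intended argument is exactly the Fubini/Tonelli interchange you describe, with the uniform bound from Proposition \ref{p2.1} (together with $\psi\in L^1(\R)$) supplying the absolute integrability of $g_\al(t-y)\n{u(y)}|\psi(t)|$ over $\{y<t\}$ that legitimizes the swap. The variable-matching between the two half-plane descriptions and the remark on the Bochner (or functional-pairing) version of Fubini are the only points of care, and you handle both.
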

%
%
%

Then we may give the following definition of \emph{fractional derivatives}.
%
%
%
\begin{defi}\label{d2.3} Let $\al\in (0, 1)$ and 
$u\in L^1(\R; X)\cap L^\ii(\R; X)$.  We say that 
$u$ \emph{admits a (forward) derivative of order $\al$ in} 
$L^\ii(\R; X)$ if  
$$
      g_{1-\al}*u \in W^{1,\ii}(\R; X).
$$  
In this case, its \emph{(forward) derivative of order $\al$} is the \fu of 
$L^\ii(\R; X)$ defined by
$$
    \d u := \f{{\rm d}}{{\rm d}t}\big\{
    g_{1-\al}*u \big\}.
$$ 
\end{defi}
%
%
%
\begin{defi}\label{d2.4} Let $\al\in (0, 1)$ and 
$u\in W^{1,1}(\R; X)\cap W^{1,\ii}(\R; X)$. Then we say that 
$u$ \emph{admits a backward derivative of order $\al$ in} 
$L^\ii(\R; X)$ if
$$
      g_{1-\al}*'\f{{\rm d}}{{\rm d}t} u \in L^\ii(\R; X).
$$
In this case, its \emph{backward derivative of order $\al$} is the \fu of 
$L^\ii(\R; X)$ defined by
$$
    \dm u := g_{1-\al}*'\f{{\rm d}}{{\rm d}t} u.
$$ 
\end{defi}
Let $\S(\R)$ denote the Schwartz space of rapidly decreasing complex \fus defined on $\R$.
%
%
\begin{prop}  \label{p2.3} 
Let $\al\in (0, 1)$, $u\in L^1(\R; X)\cap L^\ii(\R; X)$ and $\v\in \S(\R)$. 
Assume that $u$ 
admits a fractional derivative of order $\al$ in $L^\ii(\R; X)$. Then
\begin{equation} \label{2.6}
    \int_\R \d u(t)\v(t)\dt =
    -\int_\R u(t)\dm\v(t)\dt.
\end{equation}
\end{prop}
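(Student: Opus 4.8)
The plan is to start from the left-hand side of \eqref{2.6}, shift the time derivative off $g_{1-\al}*u$ by an ordinary integration by parts, and then transfer the convolution from $u$ onto $\v$ by invoking the adjoint relation of Proposition \ref{p2.2}. The whole argument is a concatenation of the earlier propositions and definitions, so the only real work is to justify the integration by parts.

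First I would record that, by Definition \ref{d2.3}, the function $w:=g_{1-\al}*u$ belongs to $W^{1,\ii}(\R; X)$ with $\ddt w=\d u$; in particular $w\in L^\ii(\R; X)$. Since $\v\in\S(\R)$ is scalar-valued and rapidly decreasing, I would write
$$
    \int_\R \d u(t)\,\v(t)\dt=\int_\R \ddt w(t)\,\v(t)\dt
$$
and integrate by parts. Because $w$ is bounded and $\v(t)\to 0$ as $|t|\to\ii$, the boundary contribution $\big[w(t)\v(t)\big]_{-\ii}^{+\ii}$ vanishes, leaving
$$
    \int_\R \d u(t)\,\v(t)\dt=-\int_\R w(t)\,\v'(t)\dt=-\int_\R (g_{1-\al}*u)(t)\,\v'(t)\dt.
$$

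Next I would apply Proposition \ref{p2.2} with $\al$ replaced by $1-\al\in(0,1)$ and with $\psi:=\v'$. The hypotheses hold since $u\in L^1(\R; X)\cap L^\ii(\R; X)$ by assumption and $\v\in\S(\R)$ forces $\v'\in\S(\R)\subset L^1(\R)\cap L^\ii(\R)$. This gives
$$
    \int_\R (g_{1-\al}*u)(t)\,\v'(t)\dt=\int_\R u(t)\,(g_{1-\al}*'\v')(t)\dt.
$$
Finally, since $\v\in\S(\R)\subset W^{1,1}(\R)\cap W^{1,\ii}(\R)$, Definition \ref{d2.4} applies to $\v$ and identifies $g_{1-\al}*'\v'=g_{1-\al}*'\ddt\v=\dm\v$, which turns the last integral into $-\int_\R u(t)\,\dm\v(t)\dt$ and establishes \eqref{2.6}.

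The one delicate point is the integration by parts for the $X$-valued Sobolev function $w$ against the scalar Schwartz weight $\v$: one must verify that the boundary terms genuinely vanish rather than merely that the integrand is integrable. I expect to handle this by integrating by parts on a finite interval $[-R,R]$ and letting $R\to\ii$, using the boundedness of $w$ (from $w\in W^{1,\ii}(\R; X)$, or equivalently the sup-bound of Proposition \ref{p2.1}) together with the rapid decay of $\v$; the remaining steps are formal applications of Propositions \ref{p2.1}, \ref{p2.2} and Definitions \ref{d2.3}, \ref{d2.4}.
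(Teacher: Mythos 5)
Your proof is correct and follows exactly the route the paper intends: its entire proof reads ``Integrate by parts and use Proposition \ref{p2.2}'', and your write-up simply fills in those two steps (with the substitution $\al\mapsto 1-\al$, $\psi=\v'$, and the identification $g_{1-\al}*'\v'=\dm\v$ from Definition \ref{d2.4}) together with the routine justification of the vanishing boundary terms.
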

%
%
%
\begin{proof}
Integrate by parts and use \pr \ref{p2.2}.
\end{proof}
We will now introduce \emph{fractional derivatives in the sense of distributions}. The following result makes possible such a definition.
%
%
\begin{prop}  \label{p2.4} 
Let $\al\in (0, 1)$ and $\v\in \S(\R)$. Then $\dm\v\in L^1(\R)$ and
\begin{equation} \label{2.8}
    \n{\dm\v}_{L^1(\R)} \le C\big(\n{\v}_{L^1(\R)} + \n{\v'}_{L^1(\R)}\big),
\end{equation}
where $\v':=\ddt\v$ and the constant $C$ depends only on $\al$.
\end{prop}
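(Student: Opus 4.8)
The plan is to begin from the explicit formula obtained by unwinding Definition \ref{d2.4} together with the definition of $*'$, namely
$$
\dm\v(t) = g_{1-\al}*'\v'(t) = \int_0^\ii g_{1-\al}(y)\,\v'(t+y)\dy,
\h g_{1-\al}(y)=\f{1}{\Gamma(1-\al)}y^{-\al}.
$$
Since $\v\in\S(\R)$ we have $\v'\in\S(\R)\subset L^1(\R)\cap L^\ii(\R)$, so by Proposition \ref{p2.1} this integral is well defined for every $t$. The only genuine difficulty is the behaviour of the kernel $g_{1-\al}$, which for $\al\in(0,1)$ is integrable near $y=0$ but \emph{not} near $y=\ii$ (the decay $y^{-\al}$ is too slow). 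This failure of integrability at infinity is the main obstacle: it forbids a direct application of Tonelli's theorem to the whole integral.

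To get around it, I would split the $y$-integral at $y=1$. On the near part $\int_0^1 g_{1-\al}(y)\v'(t+y)\dy$ I would take absolute values, apply Tonelli, and use $\int_0^1 g_{1-\al}(y)\dy = g_{2-\al}(1)=\f{1}{\Gamma(2-\al)}$; this bounds the $L^1(\R)$ norm of the near part by $\f{1}{\Gamma(2-\al)}\n{\v'}_{L^1(\R)}$.

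For the tail $\int_1^\ii g_{1-\al}(y)\v'(t+y)\dy$ the decisive step is an integration by parts in $y$, writing $\v'(t+y)=\f{{\rm d}}{{\rm d}y}\v(t+y)$ and transferring the derivative onto the kernel. The boundary term at $y=\ii$ vanishes by the rapid decay of $\v$, the boundary term at $y=1$ produces (up to sign) $\f{1}{\Gamma(1-\al)}\v(t+1)$, and the remaining integral features $g_{1-\al}'(y)=-\f{\al}{\Gamma(1-\al)}y^{-\al-1}$. The crucial gain is that $y^{-\al-1}$ \emph{is} integrable at infinity, with $\int_1^\ii y^{-\al-1}\dy=\f{1}{\al}$. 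Taking absolute values and applying Tonelli once more bounds the $L^1(\R)$ norm of the tail by $\f{2}{\Gamma(1-\al)}\n{\v}_{L^1(\R)}$.

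Adding the two estimates yields \eqref{2.8} with a constant $C$ depending only on $\al$ (for instance $C=\max\{\Gamma(2-\al)^{-1},\,2\,\Gamma(1-\al)^{-1}\}$). Once the integration by parts has converted the non-integrable kernel $y^{-\al}$ into the integrable $y^{-\al-1}$, the whole argument reduces to two routine Tonelli computations.
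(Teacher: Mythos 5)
Your proof is correct and rests on the same two mechanisms as the paper's: near $y=0$ the kernel $y^{-\al}$ is already integrable and one pays with $\v'$, while at infinity a single integration by parts in $y$ upgrades the kernel to the integrable $y^{-\al-1}$ and one pays with $\v$, after which Tonelli gives \eqref{2.8} with a constant depending only on $\al$. The only difference is the order of operations: the paper integrates by parts over all of $(0,\ii)$ first, arriving at the Marchaud-type representation $\f{1}{|\Gamma(-\al)|}\int_0^\ii y^{-\al-1}\big(\v(y+t)-\v(t)\big)\dy$ and then splits at $y=1$ (recovering $\v'$ near $0$ via the difference quotient $\int_0^1\v'(ys+t)\dif s$), whereas you split first and integrate by parts only on the tail, picking up a harmless boundary term $\f{1}{\Gamma(1-\al)}\v(t+1)$ at $y=1$.
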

%
%
%
\begin{proof} For any fixed time $t$ in $\R$, let $v(y):=\v(y+t)-\v(t)$. 
Integrating by part, we get
$$
       \al\int_0^\ii y^{-\al-1}v(y)\dy =
       \int_0^\ii y^{-\al}\v'(y+t)\dy.
$$
Hence

$$
     \dm \v(t)=\f{1}{|\Gamma(-\al)|}\int_0^\ii
     y^{-\al-1}\big(\v(y+t)-\v(t)\big)\dy,
$$
and
\begin{align*}  
     |\Gamma(-\al)\dm \v(t)|& \le 
    \int_0^1 y^{-\al} \big| \f{\v(y+t)-\v(t)}{y}\big| \dy &+&
    \int_1^\ii y^{-\al-1} | \v(y+t)-\v(t)| \dy \\
    & =:  \h \h\h I_1(t)\h &+&\h\h\h I_2(t),
\end{align*}
with obvious notation for $I_1(t)$ and $I_2(t)$.

Let us show that $I_1$ is integrable on $\R$. For, since
$$
      \f{\v(y+t)-\v(t)}{y} = \int_0^1 \v'(ys+t)\dif s \vp y \in (0, 1],
$$
we derive 
$$
     \int_{\R}|I_1(t)|\dt \le \int_0^1 y^{-\al}\dy \int_0^1\dif s
                              \int_{\R}|\v'(ys+t)|\dt 
                              = \f{\n{\v'}_{L^1(\R)}}{1-\al}.
$$

Regarding $I_2(t)$, we have
\begin{align*}  
      \int_{\R}|I_2(t)|\dt  & \le
    \int_1^\ii y^{-\al-1}\int_\R |\v(y+t)| + |\v(t)|\dt \\
     & = \f{2}{\al}\n{\v}_{L^1(\R)}.
\end{align*}
%
\end{proof}
That estimate allows us to define fractional derivatives in the sense of 
distributions. Indeed, \eqref{2.8} shows that, for each 
$u\in L^\ii(\R; X)$, the linear map
$$
    \S(\R)\to X,\h  \v\mapsto -\int_\R u(t)\dm \v(t)\dt
$$
is a tempered distribution. The set of tempered distributions with values in $X$ is denoted by $\S'(\R; X)$. That allows us to set the following definition. 
%
%
%
%
%
\begin{defi}\label{d2.5} Let $\al\in (0, 1)$, $X$ be a complex Banach space and 
$u\in L^\ii(\R; X)$. Then the \emph{weak derivative of} $u$ is the $X$-valued 
tempered distribution, denoted by $\d u$, and defined, for all 
$\v\in \S(\R)$, by
$$
    \langle \d u, \v \rangle =  -\int_\R u(t)\dm \v(t)\dt.
$$
\end{defi}
If we want to highlight the duality taking place in the above bracket, we will write
$$
     \langle \d u, \v \rangle_{\S'(\R; X), \S(\R)}
     \h\mbox{or}\h \langle \d u, \v \rangle_{\S'(\R; X)}
$$
instead of $ \langle \d u, \v \rangle$.

In this paper, we will need to compute $\d u$ for bounded \fus $u$. For such \fus\rn, the integral
$$
        \int_{-\ii}^t g_{1-\al}(t-y)u(y) \dy
$$
is, in general, not absolutely convergent. That point turns out to be a major drawback; and we will explain the reason in the sequel.

Works on fractional calculus with lower terminal $-\ii$ and not absolutely convergent integrals, go back, at least to 1938 with the smart paper \cite{Lo}.  The difficulties when dealing with non 
absolutely convergent integral are illustrated by \Th 5 in \cite{Lo}, which states, roughly speaking that each $u\in L^\ii(\R)$ \st
\begin{equation} \label{2.10}
    \sup_{t\in\R}\big| \int_0^t u(y)\dy \big| < \ii,
\end{equation}
satisfies, for all $t\in\R$, the following (hard to prove) identity
$$
              g_{1-\al} * (g_\al*u)(t) = g_{1-\al} * (g_\al*u)(0) 
                                         + \int_0^t u(y)\dy.
$$

That result is quiet surprising since, extending $g_\al$ by $0$ outside of $(0, \ii)$, we have
\begin{equation*} 
       g_{1-\al} * g_\al(t)= 
	\begin{cases} 
		1& \t{if } t>0 \\
		0& \t{if } t<0
	\end{cases};
\end{equation*}
so that the convolution
$$
       (g_{1-\al} * g_\al)*u(t) = \int_{-\ii}^t u(y)\dy 
$$
is not defined in general. 

In this paper, we deal with bounded \fus satisfying typically \eqref{2.10}. However, Love's approach (see also \cite{BM}) allowed us to obtain only partial results. That is, we must assume that $\al=1/n$ for some positive integer $n$.
%
%
%
%
%
%
%
%
\section{Time fractional equations in $\R$} We will use the framework of Section \ref{s2} with $X=\C$. For $a\in\R$ and $\al\in (0, 1)$, we consider this equation:
%
\begin{equation}\label{3.2}
  \left\{
    \begin{aligned}
  & \mbox{\rm Find }u\in L^\ii(\R) \t{\st}\\
  & \d u = (\i a)^\al u,\h\t{in }\S'(\R).
\end{aligned}\right.
\end{equation}
Of course, in \eqref{3.2}, $\d u $ is understood in the sense of \de \ref{d2.5}.

For $\psi\in\ld$, we denote by $\F \psi$ or $\hat{\psi}$ its 
Fourier transform. More precisely, for $\v\in L^1(\R)$, we have
\begin{equation} \label{3.3}
    \hat{\v}(\si) = c_{\F}\int_\R {\rm e}^{-\i \si t}\v(t)\dt,\h
     \F^{-1}\v(t) =  c_{\F^{-1}}\int_\R {\rm e}^{\i t\si}\v(\si)\dif\si,
\end{equation}
where $c_{\F}$, $c_{\F^{-1}}$ denote reader's favorite constants of normalization, whose product equals $\f{1}{2\pi}$.
%
%
\begin{rem}\label{r3.1} Solving the fractional equation   
\begin{equation} \label{3.12}
    \d u = (\i a)^\al u
\end{equation}
is a touchy business. 

First, it is known that the \fu $u_1:t\mapsto \exp(\i a t)$ satisfies
$$
       \int_{0}^\ii g_{1-\al}(y)u_1(t-y) \dy = (\i a)^{\al-1}u_1(t).
$$
Thus, $u_1$ solves \eqref{3.12} in some ``classical sense''. However, the above integral is not absolutely convergent. Thus it is not clear how to get uniqueness 
for \eqref{3.12} in a space containing $u_1$ (typically $C_{\rm b}(\R)$). This is the reason why we use time fractional derivatives in the sense of 
distributions to solve \eqref{3.12}. 

But then, existence for \eqref{3.2} is tricky since, we have not been able to show directly that $u_1$ solves \eqref{3.2}. Indeed, Fubini's \Th being not applicable, the path from
$$
    \int_\R {\rm e}^{\i at}\dm \v(t)\dt = 
    \int_\R {\rm e}^{\i at}\int^{\ii}_t g_{1-\al}(y-t)\v'(y) \dy
$$
to 
$$
     \int_\R \v'(y) \dy \int_{-\ii}^y 
     {\rm e}^{\i at} g_{1-\al}(y-t)\dt
$$
is not clear. We overcome this difficulty by using Fourier transforms.

The last difficulty is technical and concerns the use of Fourier transforms. 
More precisely, starting from \eqref{3.12}, we obtain formally by Fourier transform, 
$$
        (\i \si)^\al \hat{u} = (\i a)^\al \hat{u}.  
$$
However, if $u=u_1$ then 
$$
        (\i \si)^\al \hat{u} = \textstyle{\f{1}{c_{\F^{-1}}}}(\i \si)^\al\delta_a,
$$
which has no meaning since the \fu $\si\mapsto (\i \si)^\al$, being 
not smooth at $\si=0$, cannot multiply $\delta_a$, the Dirac mass at the point $a$.

In order to overcome this difficulty, we first determine the support of 
$\hat{u}$, which turns out to be discrete, as expected; and then compute $u$ by inverse Fourier transform. 
\end{rem}
%
%
%
%
%
\begin{theo}\label{t3.1} Let  $a\in\R$, $\al\in (0, 1)$ and $u\in L^\ii(\R)$. Then $u$ is a \so of \eqref{3.2} iff there exists $k\in\C$ \st
$$
      u(t) = k \exp(\i at),\h\t{a.e. }t\in\R.
$$
\end{theo}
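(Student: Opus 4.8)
The plan is to pass to the Fourier side, where the operator $\d$ becomes multiplication by the symbol $(\i\si)^\al$, and to exploit the fact that this symbol equals the constant $(\i a)^\al$ only at the single point $\si=a$. The preliminary and most technical step is to compute, for $\v\in\S(\R)$, the Fourier transform of the backward derivative. Starting from $\dm\v=g_{1-\al}*'\v'$, the (regularized) transform $\widehat{g_\be}(\si)=c_\F(\i\si)^{-\be}$ of the fractional kernel together with the reflection identity relating $*'$ to $*$ should yield
$$
\widehat{\dm\v}(\si)=-(-\i\si)^\al\,\hat\v(\si).
$$
By \Pr \ref{p2.4} the left-hand side is the transform of an $L^1$ function, so this is an honest identity between continuous functions; the only delicate points are the choice of branch and the fact that $\widehat{g_\be}$ is a boundary value, which I would justify through the limit $g_\be(t)\rme^{-\ep t}$, $\ep\downarrow0$.

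Granting this, the \emph{sufficiency} direction is a direct computation. For $u=k\exp(\i a\,\cdot)$ and $\v\in\S(\R)$, \de \ref{d2.5} gives $\langle\d u,\v\rangle=-k\int_\R\rme^{\i at}\dm\v(t)\dt$, and by Parseval this integral is a fixed multiple of $\widehat{\dm\v}(-a)=-(\i a)^\al\hat\v(-a)$. The right-hand side $(\i a)^\al\int_\R u\,\v$ produces the very same quantity, whence $\d u=(\i a)^\al u$ in $\S'(\R)$. This is precisely the route that circumvents the non-absolutely-convergent integral and the failure of Fubini flagged in Remark \ref{r3.1}.

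For \emph{necessity}, I would first rewrite \eqref{3.2} as $\int_\R u(t)\,w(t)\dt=0$ for every $\v\in\S(\R)$, where $w:=-\dm\v-(\i a)^\al\v\in L^1(\R)$ (again by \Pr \ref{p2.4}), with $\hat w(\si)=\big[(-\i\si)^\al-(\i a)^\al\big]\hat\v(\si)$. The symbol $(-\i\si)^\al-(\i a)^\al$ vanishes only at $\si=-a$ and is non-smooth only at $\si=0$; accounting for the reflection carried by Parseval, this means that for any $\si_0\notin\{0,a\}$ and any $\chi\in C_c^\infty$ supported near $\si_0$ one may divide by the symbol to produce $\v\in\S(\R)$ for which $\hat w$ is (up to reflection and a constant) equal to $\chi$, so that $0=\int u\,w=\langle\hat u,\chi\rangle$. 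Hence $\mathrm{supp}\,\hat u\subseteq\{0,a\}$, and by the structure theorem for distributions with finite support, $\hat u=\sum_j c_j\delta_0^{(j)}+\sum_j d_j\delta_a^{(j)}$. Inverting the transform, $u$ is a polynomial plus a polynomial times $\rme^{\i at}$; boundedness of $u$ forces every polynomial to be constant, so $u=\tilde c+\tilde d\,\rme^{\i at}$. Applying the equation to this $u$ and using the sufficiency direction (which annihilates the $\rme^{\i at}$ part) together with $\d(\mathrm{const})=0$ leaves $-(\i a)^\al\tilde c=0$; since $a\neq0$ gives $(\i a)^\al\neq0$, we conclude $\tilde c=0$ and $u=\tilde d\,\rme^{\i at}$. (When $a=0$ the equation is $\d u=0$, the support lies in $\{0\}$, and the surviving constant is exactly $k\exp(\i\cdot0\cdot t)$, so the stated form persists.)

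I expect the main obstacle to be precisely the non-smoothness of $\si\mapsto(\i\si)^\al$ at $\si=0$. It is what blocks a one-line global division by the symbol (its reciprocal is not an $\mathcal O_M$-multiplier), forcing the two-stage argument: first confine $\mathrm{supp}\,\hat u$ to the pair $\{0,a\}$, and then remove the spurious mass at the origin by the separate boundedness-plus-equation step. Pinning down the symbol of $\dm$ with the correct branch, and justifying the division as a genuine operation inside $\S(\R)$ rather than an illegal multiplication of a distribution by a non-smooth function, are the points demanding the most care.
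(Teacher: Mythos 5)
Your proposal is correct in outline and its architecture coincides with the paper's: pass to the Fourier side, confine $\mathrm{supp}\,\hat u$ to the two-point set $\{0,a\}$ by dividing by the symbol away from its zero and its non-smooth point, invoke the structure theorem for distributions with finite support, use boundedness to discard the derivatives of Dirac masses, and finally eliminate the spurious constant by feeding $u=\tilde c+\tilde d\,\rme^{\i at}$ back into the equation. The one genuine difference lies in the key lemma. You propose to compute the Fourier multiplier of $\dm$ directly, $\F(\dm\v)(\si)=-(-\i\si)^\al\hat\v(\si)$, justifying $\widehat{g_\be}$ as a boundary value via the regularization $g_\be(t)\rme^{-\ep t}$. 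The paper deliberately avoids this: it quotes the formula $\F(\d\v)=(\i\si)^\al\hat\v$ from the literature and then derives, by a Plancherel duality argument (Proposition \ref{p2.3} applied twice), the identity $\dms\hat\v=-\F\big((\i t)^\al\v\big)$, which is the statement it actually needs; the sufficiency step then reduces to a single Fourier inversion of an $L^1$ function. Your route is more self-contained but carries the burden you yourself flag: the naive Fubini interchange in $\F(g_{1-\al}*'\v')$ fails because $g_{1-\al}$ is not integrable at infinity, so you must either carry out the $\ep$-regularization carefully or, better, use the absolutely convergent Marchaud-type representation $\dm\v(t)=\frac{1}{|\Gamma(-\al)|}\int_0^\ii y^{-\al-1}\big(\v(y+t)-\v(t)\big)\dy$ established in the proof of Proposition \ref{p2.4}, from which the multiplier $-(-\i\si)^\al$ follows from the classical evaluation of $\int_0^\ii y^{-\al-1}(\rme^{\i\si y}-1)\dy$. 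With that point nailed down, your sign and branch bookkeeping checks out (the symbol $(-\i\si)^\al-(\i a)^\al$ vanishes only at $\si=-a$, which after the reflection in Parseval lands the support at $a$), and the two proofs are otherwise interchangeable.
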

The following lemma will be usefull in the proof of \Th \ref{t3.1}.
%

%
%
\begin{lem}  \label{l3.1} 
Let $\v\in\S(\R)$. Then, for all $\si\in\R$, 
\begin{align}     
   \F\big(\d\v\big)(\si)  & = (\i \si)^\al \hat{\v}(\si)  \label{3.4}\\	
   \dms \hat{\v}(\si) & = -\F\big( (\i t)^\al\v \big) (\si). \label{3.6}
\end{align}
\end{lem}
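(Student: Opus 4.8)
The plan is to derive both identities from one computation, namely the Fourier transform of the fractional kernel $g_{1-\al}$. The basic input is the elementary identity
$$
\int_0^\ii g_\be(y)\,{\rm e}^{-\i c y}\dy = (\i c)^{-\be},\qquad \be\in(0,1),\ c\in\R\setminus\{0\},
$$
which follows from $\int_0^\ii y^{\be-1}{\rm e}^{-py}\dy=\Gamma(\be)p^{-\be}$ (valid for $\Re p>0$) on taking $p=\ep+\i c$ and letting $\ep\to0^+$, the principal branch of $(\i c)^{-\be}$ covering both signs of $c$. This is the delicate step: for $\be=1-\al$ the integrand decays only like $y^{-\al}$ at infinity, so the integral is merely conditionally convergent, and the cutoff ${\rm e}^{-\ep y}$ is what legitimizes its evaluation.

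For \eqref{3.4}, I would start from the definition $\d\v=\ddt\{g_{1-\al}*\v\}$, so that $\F(\d\v)(\si)=(\i\si)\,\F(g_{1-\al}*\v)(\si)$ by the rule $\F(f')=(\i\si)\hat f$. Expanding $\F(g_{1-\al}*\v)(\si)=c_\F\int_\R{\rm e}^{-\i\si t}\int_0^\ii g_{1-\al}(y)\v(t-y)\dy\dt$ and interchanging the order of integration, one is left with $\hat\v(\si)\int_0^\ii g_{1-\al}(y){\rm e}^{-\i\si y}\dy$, so the kernel identity with $\be=1-\al$, $c=\si$ gives $\F(g_{1-\al}*\v)(\si)=(\i\si)^{\al-1}\hat\v(\si)$. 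Multiplying by $\i\si$ yields \eqref{3.4}.

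For \eqref{3.6}, I would start from the definition $\dms\hat\v=g_{1-\al}*'\f{{\rm d}}{{\rm d}\si}\hat\v$ and note that differentiating under the integral sign gives $\f{{\rm d}}{{\rm d}\si}\hat\v=\F(-\i t\,\v)=:\hat w$ with $w:=-\i t\,\v$. Then
$$
g_{1-\al}*'\hat w(\si)=\int_0^\ii g_{1-\al}(y)\hat w(\si+y)\dy = c_\F\int_\R{\rm e}^{-\i\si t}w(t)\Big(\int_0^\ii g_{1-\al}(y){\rm e}^{-\i t y}\dy\Big)\dt,
$$
and the same kernel identity (now with $c=t$) turns the inner factor into $(\i t)^{\al-1}$, whence $g_{1-\al}*'\hat w=\F\big((\i t)^{\al-1}w\big)$. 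Since $(\i t)^{\al-1}w=(\i t)^{\al-1}(-\i t)\v=-(\i t)^\al\v$, this equals $-\F((\i t)^\al\v)$, proving \eqref{3.6}.

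The main obstacle throughout is that these convolutions and the Fourier transform of $g_{1-\al}$ rest on integrals that are not absolutely convergent, so neither the kernel evaluation nor the Fubini interchanges are immediate; indeed $\int_0^\ii g_{1-\al}$ diverges, so naive Fubini with absolute values fails. Carrying the exponential cutoff ${\rm e}^{-\ep y}$ through every interchange and passing to the limit $\ep\to0^+$, with the rapid decay of $\v$ providing dominated convergence for the outer integrals, is the device that makes every step rigorous. The remaining work is only the branch-of-power bookkeeping $(\i t)^{\al-1}(\i t)=(\i t)^\al$ for a fixed base.
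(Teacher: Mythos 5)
Your argument is correct in substance but takes a genuinely different route from the paper's. The paper does not prove \eqref{3.4} at all --- it quotes it from \cite[Section 7.1]{SKM} --- and then derives \eqref{3.6} from \eqref{3.4} by a purely ``soft'' duality argument: pair $\dms\hat\v$ with $\psi\in\S(\R)$, move the fractional derivative onto $\psi$ via Proposition \ref{p2.3}, and apply the Plancherel (multiplication) formula twice with \eqref{3.4} inserted in between. Every integral in that chain is absolutely convergent (Proposition \ref{p2.4} gives $\dms\hat\v\in L^1(\R)$), so the paper never performs a Fubini interchange --- precisely the pitfall flagged in Remark \ref{r3.1}. You instead prove both identities directly from the Abel-regularized evaluation $\int_0^\ii g_\be(y){\rm e}^{-\i cy}\dy=(\i c)^{-\be}$, which buys self-containedness (you reprove the cited fact \eqref{3.4}) at the cost of handling conditionally convergent double integrals. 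Your treatment of \eqref{3.6} is sound: the outer object is a pointwise evaluation, the dominating function $|t|^{\al}|\v(t)|$ is integrable, and $|(\ep+\i t)^{\al-1}|\le|t|^{\al-1}$ legitimizes the limit $\ep\to0^+$. The one soft spot is \eqref{3.4}: the claim that ``the rapid decay of $\v$ provides dominated convergence for the outer integrals'' does not literally apply there, since the outer integrand $g_{1-\al}*\v$ generally decays only like $|t|^{-\al}$ and is not in $L^1(\R)$; one must read $\F(g_{1-\al}*\v)$ in $\S'(\R)$ and pass to the limit $\ep\to0^+$ after pairing with a Schwartz test function (or work with $\d\v=g_{1-\al}*\v'$ and split the $y$-integral as in the proof of Proposition \ref{p2.4}). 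With that standard adjustment --- essentially the argument of \cite{SKM} --- your proof goes through; note also that your kernel identity needs $c\ne0$, so the points $\si=0$ in \eqref{3.4} and $t=0$ in \eqref{3.6} should be recovered by continuity.
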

%
%
\begin{proof} For the proof of \eqref{3.4}, we refer to \cite[Section 7.1]{SKM}. We establish \eqref{3.6} by a duality argument. Indeed, for all $\v$ and 
$\psi$ in $\S(\R)$, there holds, by \pr \ref{p2.3},
\begin{align*}  
      	\int_\R \dms \hat{\v}(\si) \psi(\si)\dif\si 
        & = -\int_\R \hat{\v}(\si) \ds\ \psi(\si)\dif\si \\
        & = -\int_\R \v(\si) \F\big(\d\ \psi\big)(\si)\dif\si,
\end{align*}
by Plancherel's formula. Thus with \eqref{3.4} and Plancherel's formula once again, we get
\begin{align*}  
      	\int_\R \dms \hat{\v}(\si) \psi(\si)\dif\si 
        & = -\int_\R \v(\si) (\i \si)^\al \hat{\psi}(\si) \dif\si\\
        & = -\int_\R \F\big( (\i t)^\al\v \big)(\si) \psi(\si) \dif\si.
\end{align*}
That proves \eqref{3.6}.
       
\end{proof}
%
%
%
\begin{proof}[Proof of \Th \ref{t3.1}] (i) Let us show that
the \fu $u:t \mapsto{\rm e}^{\i at}$ is \so to \eqref{3.2}. For, we compute 
\begin{align*}  
     \langle \F\big(\d u\big), \v \rangle_{\S'(\R), \S(\R)}
     & = \langle \d u, \hat{\v} \rangle\\
     & = -\int_\R u(\si) \dms \hat{\v}(\si)\dif\si\\
     & = \int_\R {\rm e}^{\i a\si} \F\big( (\i t)^\al\v \big)(\si)\dif\si,
\end{align*}
by \eqref{3.6}. Besides, the Fourier transform of $t\mapsto (\i t)^\al\v(t)$ 
belongs to $L^1(\R)$ by \eqref{3.6} and Proposition \ref{p2.4}. Then the 
latter integral is equal, by inverse Fourier transform (see \eqref{3.3}), to
\begin{align*}  
      	\f{1}{c_{\F^{-1}}} (\i a)^\al \v(a)
      = (\i a)^\al \langle \f{\delta_{a}}{c_{\F^{-1}}}, \v \rangle
      &= (\i a)^\al \langle \F\big({\rm e}^{\i at} \big), \v \rangle\\
      &= (\i a)^\al \langle \F u , \v \rangle.	
\end{align*}
Thus by invertibility of the Fourier transform in $\S'(\R)$, we deduce that 
$t\mapsto {\rm e}^{\i at}$ is \so to \eqref{3.2}.

(ii) Conversely, let us show that each \so $u$ to \eqref{3.2} is proportional 
to some exponential \fu\rn. Computing as above, we get, for each test-\fu $\v$ in $\S(\R)$,
$$
     \langle \F\big(\d u\big), \v \rangle_{\S'(\R), \S(\R)}=
     \int_\R u(\si) \F\big( (\i t)^\al\v \big)(\si)\dif\si.
$$
Thus, since $u$ solves \eqref{3.2},
$$
    0= \int_\R u(\si) \F\big( \{(\i t)^\al - (\i a)^\al\}\v \big)(\si)\dif\si.
$$

Now, set 
\begin{equation*}
  I_a :=
	\begin{cases} 
		\{0\}  & \t{if }a=0  \\
		\{0,a\}& \t{if }a \not= 0
	\end{cases}.
\end{equation*}
Then, for each $\psi\in \S(\R)$ with support, denoted by ${\rm supp}\psi$, in 
$\R\setminus I_a$, we may find a 
\fu $\v\in \S(\R)$ so that
$$
      \psi(t)=\big((\i t)^\al - (\i a)^\al\big)\v (t)\vp t\in {\rm supp}\psi.
$$

There results that
$$
      \langle \hat{u}, \psi \rangle_{\S'(\R), \S(\R)}= 0.
$$
Thus the support of the distribution $\hat{u}$ lies in $I_a$. By standard results in the theory of distributions, we infer that there exist 
some integer $n\ge 0$ and $c_k$, $d_k\in\C$ ($k=0,\dots, n$) \st
$$
     \hat{u} = \sum_{k=0}^n c_k \delta_0^{(k)} + d_k \delta_a^{(k)},
$$
where $\delta_a$ denotes the Dirac mass at $a$. Hence, 
for other constants still labeled $c_k$ and $d_k$,
$$
     u(t) = \sum_{k=0}^n c_k t^{k} + d_k t^{k}{\rm e}^{\i at} \vp t\in\R.
$$
Since $u$ is bounded, $u(t) = c_0  + d_0{\rm e}^{\i at}$. Finally, according to 
the existence part (i) of this proof, we have
$$
      \d ( c_0  + d_0{\rm e}^{\i at}) = d_0(\i a)^\al{\rm e}^{\i at}
      \h\t{in }\S'(\R).
$$
On the other hand, since $u$ solves \eqref{3.2}, there holds
$$
      \d ( c_0  + d_0{\rm e}^{\i at}) = (\i a)^\al( c_0  + d_0{\rm e}^{\i at}).
$$
Thus, for all $t\in\R$, we have $u(t) = d_0 \exp(\i at)$.
%
\end{proof}
%
%
%
%
%
%
%
%
%
%
\section{Time fractional Schrödinger equation} Let $\al\in (0, 1)$, $d\ge 1$ be an 
integer and $A:D(A)\subseteq \ld \to\ld$ be a unbounded operator on $\ld$ with domain 
$D(A)$. For each $v\in D(A)$, we consider the following 
\emph{Time Fractional Schrödinger Equation}
%
\begin{equation}\label{4.2}
  \left\{
    \begin{aligned}
  & \mbox{\rm Find }u\in C_{\rm b}\big(\R, D(A) \big)\t{\st} \\
  & \d u = \i^\al A u,\h  \t{in }\S'(\R, \ld)\\
  & u(0) = v.
\end{aligned}\right.
\end{equation}
In \eqref{4.2}, $C_{\rm b}\big(\R, D(A) \big)$ is the space of bounded and continuous \fus defined on $\R$ with values in $D(A)$. 
%
%
\begin{rem}\label{r4.1}
There are many ways to extend the \emph{free Schrödinger equation}, i.e. 
\begin{equation} \label{4.4}
    \ddt u = -\i\Delta u,
\end{equation}
into a time fractional equation (see for instance \cite{Na}, \cite{Las}, \cite{Lu}). First, we may consider
$$
     \db^\al u = -\i\Delta u.
$$
However, as pointed out in \cite{ER2}, that equation has  
regularizing effect and dissipative properties, in a great contrast with 
\eqref{4.4}. In order to recover a hyperbolic behavior, it is better to extend \eqref{4.4} by
\begin{equation} \label{4.6}
    \db^\al u = -\i^\al \Delta u.
\end{equation}
Indeed, according to \cite[\Th 4.4 \& Example 4.6]{ER2}, this equation has no regularizing effect and possesses an asymptotic conservation law. The drawback of \eqref{4.6} regarding the dynamics, is that it does not generate a semi-group. However, \eqref{4.2} does, as we will show in the sequel.
\end{rem}
%
%
%
%
%
\begin{rem}\label{r4.2} The equation of \eqref{4.2} holds equivalently in 
$C_{\rm b}\big(\R, L^2(\rd)\big)$. In any case, $\d u $ is understood in the sense of \de \ref{d2.5}.
\end{rem}
%
%
%
%
%
%
\begin{theo}\label{t4.1} Let  $\al\in (0, 1)$, $A:D(A)\subseteq \ld \to\ld$ be a positive self adjoint operator on $\ld$ and $v\in  D(A)$. Then \eqref{4.2} has a unique \so $u$. Moreover, for all $t\in\R$,
\begin{equation} \label{4.8}
    u(t) = {\rm e}^{\i t A^{1/\al}} v,\h\t{in }D(A).
\end{equation}
\end{theo}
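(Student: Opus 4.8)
The plan is to reduce the abstract operator equation \eqref{4.2} to the scalar case already settled in Theorem \ref{t3.1}, using the spectral decomposition of the positive self-adjoint operator $A$. Since $A$ is positive and self-adjoint on $\ld$, the spectral theorem provides a projection-valued measure $E$ supported on $[0,\ii)$ such that $A=\int_0^\ii \la \dif E(\la)$, and for any Borel function $f$ one has $f(A)=\int_0^\ii f(\la)\dif E(\la)$ with domain characterized by $\int_0^\ii |f(\la)|^2 \dif \n{E(\la)v}^2<\ii$. First I would record that, for fixed $v\in D(A)$, the candidate $u(t)={\rm e}^{\i tA^{1/\al}}v$ is well defined: since $A^{1/\al}$ is self-adjoint, ${\rm e}^{\i tA^{1/\al}}$ is unitary, so $t\mapsto u(t)$ is bounded in $\ld$; moreover $u(t)\in D(A)$ for all $t$ because ${\rm e}^{\i tA^{1/\al}}$ commutes with $A$ and $v\in D(A)$, giving $Au(t)={\rm e}^{\i tA^{1/\al}}Av$, so that $u\in C_{\rm b}(\R,D(A))$ by strong continuity of the unitary group. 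Clearly $u(0)=v$.

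Next I would verify that this $u$ solves the fractional equation. The key observation is that, componentwise in the spectral variable $\la$, the map $t\mapsto {\rm e}^{\i t\la^{1/\al}}$ is exactly the scalar exponential solving \eqref{3.2} with $a=\la^{1/\al}$, for which $\d({\rm e}^{\i a t})=(\i a)^\al {\rm e}^{\i a t}=\i^\al a^\al {\rm e}^{\i a t}$ (using $a=\la^{1/\al}\ge 0$ so that $(\i a)^\al=\i^\al a^\al$ unambiguously, and $a^\al=\la$). Thus at the spectral level the scalar solution produces the eigenvalue multiplier $\i^\al\la$, matching $\i^\al A$. To make this rigorous at the operator level I would test against $\v\in\S(\R)$ and $\psi\in\ld$: by Definition \ref{d2.5},
\begin{equation*}
    \langle \d u,\v\rangle = -\int_\R u(t)\,\dm\v(t)\dt,
\end{equation*}
and I would pair this with $\psi$ and insert the spectral integral for $u(t)$. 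Using Fubini (justified by the $L^1$-bound on $\dm\v$ from Proposition \ref{p2.4} together with the finite spectral measure $\dif\n{E(\la)v}^2$ and unitarity), I would interchange the spectral integral with the time integral, apply the scalar computation from part (i) of the proof of Theorem \ref{t3.1} to the inner integral $-\int_\R {\rm e}^{\i t\la^{1/\al}}\dm\v(t)\dt=\i^\al\la\int_\R {\rm e}^{\i t\la^{1/\al}}\v(t)\dt$, and reassemble the result as $\langle \i^\al Au,\v\rangle$. This shows $\d u=\i^\al Au$ in $\S'(\R,\ld)$, i.e. existence.

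For uniqueness I would argue that the scalar rigidity of Theorem \ref{t3.1} transfers to each spectral fibre. Given any solution $w\in C_{\rm b}(\R,D(A))$ of \eqref{4.2}, fix $\psi\in\ld$ and consider the scalar function $t\mapsto \langle E(\cdot)w(t),\psi\rangle$; more efficiently, I would diagonalize by introducing, for a Borel set $\omega\subseteq[0,\ii)$, the component $E(\omega)w(t)$ and show it satisfies the scalar-type equation on the spectral support, then apply Theorem \ref{t3.1} fibrewise to conclude $w(t)={\rm e}^{\i tA^{1/\al}}w(0)={\rm e}^{\i tA^{1/\al}}v$. Concretely, testing $\d w=\i^\al Aw$ against $\v$ and pairing with spectral projections reduces, for (almost) every $\la$ in the spectrum, to the scalar equation $\d(\la\text{-component})=(\i\la^{1/\al})^\al(\la\text{-component})$ whose only bounded solutions are the multiples of ${\rm e}^{\i t\la^{1/\al}}$; the initial condition then fixes the constant, yielding $w=u$.

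The main obstacle is the rigorous justification of the spectral-to-scalar reduction in the sense of distributions, precisely because of the non-absolute convergence issues flagged in Remark \ref{r3.1}. The delicate point is interchanging the spectral integral $\int_0^\ii \cdots \dif E(\la)$ with the distributional time-pairing against $\dm\v$: one cannot naively apply Fubini to the underlying non-absolutely convergent fractional integrals, and this is exactly the difficulty the authors circumvented scalarly via Fourier transforms in Theorem \ref{t3.1}. I expect the clean route is to never unfold $\dm\v$ into its integral kernel but to keep it as the $L^1(\R)$ function supplied by Proposition \ref{p2.4}; then the only interchange needed is between a genuine $L^1$ time integral and the finite positive spectral measure, which is legitimate Fubini. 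Establishing measurability in $\la$ of the fibre quantities and controlling the $D(A)$-valued (not merely $\ld$-valued) convergence — so that the identity holds in $\S'(\R,\ld)$ with $Au$ making sense — is the part requiring the most care.
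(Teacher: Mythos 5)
Your proposal is correct and follows essentially the same strategy as the paper: diagonalize $A$ by the spectral theorem and apply the scalar result of Theorem \ref{t3.1} fibrewise, with the interchange of spectral and time integrals justified by the $L^1$ bound on $\dm\v$ from Proposition \ref{p2.4}. The only difference is presentational: the paper uses the spectral theorem in multiplication-operator form ($\U A\U^{-1}=M_h$ on $\lm$), which makes the ``fibre'' $\hat u(\cdot,\xi)$ a genuine bounded scalar function to which Theorem \ref{t3.1} applies literally for $\mu$-a.e.\ $\xi$, whereas your projection-valued-measure formulation would need an extra localization step to make ``for almost every $\la$ in the spectrum'' precise.
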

%
%
%

Before to prove this \th\rn, let us precise the meaning of \eqref{4.8}.  According to the \emph{Spectral \Th } in multiplication form (see \cite[\Th VIII.4]{RS} or 
\cite[\Th 10.10]{Hall}), there exist a measure space $(X,\mu)$ with finite measure $\mu$, a unitary map $\U:\ld \to \lm$ and a measurable real-valued \fu $h$ on $X$ which is finite $\mu$-a.e., \st
\begin{equation} \label{4.10}
      	\U \big(D(A) \big) = \big\{   \psi\in \lm \ | \ h\psi\in\lm \big\}
\end{equation}
and
\begin{equation} \label{4.12}
      	\U A\U^{-1}(\psi) = h\psi\vp \psi\in \U \big(D(A) \big). 
\end{equation}

Under the assumption of \Th \ref{t4.1}, we claim that
\begin{equation} \label{4.14}
      	h\ge 0 \h\mu\mbox{-a.e. on }X. 
\end{equation}
Indeed, let us denote by $(\cdot, \cdot)_{\lm}$ the standard inner product of $\lm$. We put, for each positive integer $n$,
$$
     X_n := \big\{  \xi\in X  \ | -n < h(\xi)<0  \big\},
$$
and denote by ${\rm 1}\! {\rm I}_{X_n}$ the indicator \fu of $X_n$ (i.e. ${\rm 1}\! {\rm I}_{X_n}=1$ on $X_n$ and $0$ elsewhere). Then $h{\rm 1}\! {\rm I}_{X_n}$ lies in 
$\lm$ since $\mu$ is finite. Hence
\begin{align*}  
     \big( h{\rm 1}\! {\rm I}_{X_n},  {\rm 1}\! {\rm I}_{X_n}\big)_{\lm} & =
     \big( \U A \U^{-1} ({\rm 1}\! {\rm I}_{X_n}),  {\rm 1}\! {\rm I}_{X_n}\big)_{\lm} && \mbox{\rm(by \eqref{4.12})}\\
     & = \big( A \U^{-1} ({\rm 1}\! {\rm I}_{X_n}), 
     \U^{-1}( {\rm 1}\! {\rm I}_{X_n})\big)_{\ld} 
  && \mbox{\rm (since $\U$ is unitary)}\\
     & \ge 0    
  && \mbox{\rm (since $A$ is positive).}    
\end{align*}
Thus, $\mu(X_n)=0$; and since $h$ is finite $\mu$-a.e., we obtain \eqref{4.14}.

We are now in position to define the operator ${\rm e}^{\i t A^{1/\al}}$. For each $t\in\R$, denote by $f_t$, the \fu
$$
     f_t:[0, \ii)\to \C, \h 
     x\mapsto {\rm e}^{\i t x^{1/\al}}.
$$
Then ${\rm e}^{\i t A^{1/\al}}$ is defined through \emph{bounded functional calculus} 
for self adjoint operators 
(see \cite[\Th VIII.5]{RS} or \cite[Corollary 4.43]{Kow}), by
\begin{equation} \label{4.16}
      {\rm e}^{\i t A^{1/\al}} := f_t(A) :=
      \U^{-1} M_{f_t\circ h}\U,
\end{equation}
where $ M_{f_t\circ h}:\lm\to\lm$ is the multiplication operator defined by 
$$
          M_{f_t\circ h}\psi=f_t(h)\psi\vp \psi\in\lm.
$$
%
%
%
%
\begin{proof}[Proof of \Th \ref{t4.1}](i) Uniqueness of the \so $u$. By linearity, it is enough to show that 
$v=0$ in \eqref{4.2} implies $u=0$. For each $\v\in\S(\R)$, one has
\begin{align*}  
     \U\big( \langle \d u, \v \rangle_{\S'(\R; \ld)} \big) 
     &= -\U\big( \int_\R u(t)\dm \v(t)\dt\big) 
     && \mbox{\rm(by \de\ref{d2.5})}\\
     &= -\int_\R \U\big(u(t)\big)\dm \v(t)\dt 
     && \mbox{\rm(by \cite[Prop. 1.1.6]{AB})}\\
     &= \langle \d \hat{u}, \v \rangle_{\S'(\R; \lm)} 
     && \mbox{\rm in }\lm,  
\end{align*}
according to \de \ref{d2.5} and with the notation $\hat{u}(t):=\U(u(t))$.

On the other hand,
\begin{align*}  
     \U\big( \langle\i^\al A u, \v \rangle_{\S'(\R; \ld)} \big) 
     &= \U\big( \int_\R \i^\al A u(t)\v(t)\dt\big) \\
     &= \int_\R \i^\al h\hat{u}(t)\v(t)\dt
     && \mbox{\rm(since $\U A(u(t))=h\hat{u}(t)$ by \eqref{4.12})}\\
     &= \langle \i^\al h \hat{u}, \v \rangle_{\S'(\R; \lm)}.  
\end{align*}

Since $u$ solves \eqref{4.2}, we get
$$
     \d \hat{u} =  \i^\al h \hat{u},\h\t{in }\S'\big(\R; \lm\big).
$$
In particular, for $\mu$-a.e. $\xi\in X$, there holds $h(\xi)\ge 0$ and
\begin{equation} \label{4.18}
       \d \hat{u}(\cdot, \xi) = \i^\al h(\xi) \hat{u}(\cdot, \xi),\h\t{in }\S'(\R).
\end{equation}

Besides, $\hat{u}(0, \xi)=\U(u(0))(\xi)=0$ for $\mu$-a.e.\! $\xi$ in $X$. Thus \Th \ref{t3.1} leads to $\hat{u}(\cdot, \xi)=0$; so that $u=0$. That completes the uniqueness part of the proof.
\vskip12pt
(ii) Existence. Let us show that the \fu
$$
    u:t\mapsto {\rm e}^{\i t A^{1/\al}} v
$$
solves \eqref{4.2}. For, let $t\in\R$. With the notation $\hat{w}:=\U(w)$ for $w\in\ld$, we get in view of 
\eqref{4.16}
\begin{equation} \label{4.22}
      \hat{u}(t) = f_t(h)\hat{v}.	
\end{equation}

Since $f_t$ is bounded $\mu$-a.e.\! on $X$, we deduce that $\hat{u}$ belongs to 
$L^\ii(\R; \lm)$. Then, thanks to \de \ref{d2.5}, we infer
$$
     \langle \d \hat{u}, \v \rangle_{\S'(\R; \lm)} = 
     - \int_\R \hat{u}(t) \dm \v(t)\dt,\h\t{in }\lm.
$$
Thus for $\mu$-a.e. $\xi$ in $X$,
\begin{align*}  
     \langle \d \hat{u}, \v \rangle_{\S'(\R; \lm)}(\xi) 
     &= -\int_\R f_t\big(h(\xi)\big)\hat{v}(\xi)\dm \v(t)\dt
     && \mbox{\rm(by \eqref{4.22})}\\
     &= \big\langle \d \big\{f_t\big(h(\xi)\big)\hat{v}(\xi)\big\} , 
                                 \v\big\rangle_{\S'(\R)} 
     \\
     &= \langle \i^\al h(\xi)\hat{u}(\cdot, \xi), \v \rangle_{\S'(\R)}
     && \mbox{\rm(by Th. \ref{t3.1}).} 
\end{align*}
Hence we have proved that, for $\mu$-a.e.\! $\xi$ in $X$,
\begin{equation} \label{4.26}
       \d \hat{u}(\cdot, \xi) = \i^\al h(\xi) \hat{u}(\cdot, \xi),\h\t{in }S'(\R).
\end{equation}

Moreover, by \eqref{4.22},
$$
    \n{h\hat{u}(t, \cdot) }^2_{\lm} = \int_X h^2 |\hat{v}|^2 \dif\mu <\ii,
$$
since $\hat{v}$ lies in $\U(D(A))-$ see \eqref{4.10}. Hence $t\mapsto h\hat{u}(t)$ belongs to $L^\ii(\R, \lm)$; so that, with \eqref{4.26}, we derive that
\begin{equation} \label{4.28}
       \d \hat{u} = \i^\al h \hat{u},\h\t{in }S'(\R, \lm).
\end{equation}

Let us go back in the direct space $\ld$ and write down the expected equation for $u$. In view of \eqref{4.12}, we have
$$
     h\hat{u}(t) = \U A u(t),\h\t{in }\lm.
$$
Also (see above, the uniqueness part of the proof)
$$
     \d \hat{u} = \U \big( \d u \big),\h\t{in }\S'(\R, \lm).
$$
Thus, with \eqref{4.28}
$$
     \d u = \i^\al A u ,\h\t{in }\S'(\R; \ld).
$$

Finally, $\hat{u}(0) = \hat{v}$, thus $u(0)=v$. Hence $u$ is \so to \eqref{4.2}, 
which completes the proof of the \th\rn.
\end{proof}
%
%
%

\Th \ref{t4.1} allows us to define the \emph{solution operator} associated to 
\eqref{4.2}. Indeed, under the assumptions of that \th\rn, for each $t$ in $\R$, we put
\begin{equation} \label{4.30}
      	 S_\al(t) := {\rm e}^{\i t A^{1/\al}}.
\end{equation}
Then, for each $v$ in $D(A)$, $t\mapsto S_\al(t)v$ is the \so to \eqref{4.2}, 
according to \Th \ref{t4.1}. Thus, by definition, $S_\al$ is the \emph{solution operator} associated to \eqref{4.2}. 


With these notation, we may give some properties of $S_\al$.

%
%
%
\begin{theo}\label{t4.2} Let  $\al\in (0, 1)$ and $A$ be a positive self adjoint operator on $\ld$. Then $S_\al$ defined by \eqref{4.30}, is a strongly continuous unitary group on $\ld$. Moreover, its infinitesimal generator is 
$\i A^{1/\al}$. 
\end{theo}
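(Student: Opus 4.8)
The plan is to transport the whole statement, via the unitary map $\U$ of the Spectral Theorem used to define \eqref{4.16}, into the representation space $\lm$, where $S_\al(t)$ becomes a multiplication operator. Once there, unitarity, the group law and strong continuity are immediate pointwise facts, and the identification of the generator reduces to a difference-quotient computation (equivalently, to Stone's Theorem applied to a self-adjoint operator). So the first step is to make sense of $A^{1/\al}$ as a self-adjoint operator. Since $h\ge 0$ $\mu$-a.e.\! by \eqref{4.14}, the function $h^{1/\al}$ is a well-defined, real-valued, measurable function that is finite $\mu$-a.e.; hence by the functional calculus the operator $A^{1/\al}:=\U^{-1}M_{h^{1/\al}}\U$ is (positive) self-adjoint, with domain characterized by $\U\big(D(A^{1/\al})\big)=\{\psi\in\lm \mid h^{1/\al}\psi\in\lm\}$. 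Writing $f_t=\phi_t\circ p$ with $p(x)=x^{1/\al}$ and $\phi_t(y)=\rme^{\i t y}$, the composition rule of the Borel functional calculus gives $f_t(A)=\phi_t(A^{1/\al})=\rme^{\i t A^{1/\al}}$, so that in $\lm$ the operator $S_\al(t)$ is exactly the multiplication operator $M_{g_t}$ by $g_t:\xi\mapsto \rme^{\i t h(\xi)^{1/\al}}$.

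Next I would verify the group and continuity properties directly in $\lm$, where everything is pointwise multiplication. Because $h^{1/\al}$ is real-valued, $|g_t|=1$ $\mu$-a.e., so $M_{g_t}$ is unitary and $S_\al(t)=\U^{-1}M_{g_t}\U$ is unitary on $\ld$. The pointwise identities $g_{t+s}=g_t\,g_s$ and $g_0=1$ yield $S_\al(t+s)=S_\al(t)S_\al(s)$ and $S_\al(0)=I$, i.e.\! the group law. For strong continuity, for each $\psi\in\lm$ one has $\n{M_{g_t}\psi-\psi}_{\lm}^2=\int_X |\rme^{\i t h^{1/\al}}-1|^2|\psi|^2\dif\mu$, and this tends to $0$ as $t\to 0$ by dominated convergence, the integrand being bounded by $4|\psi|^2$ and converging to $0$ pointwise since $h$ is finite $\mu$-a.e.; transporting back through the unitary $\U$ gives strong continuity of $S_\al$ on $\ld$.

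Finally, to identify the generator I would examine the difference quotient $\tfrac1t\big(S_\al(t)v-v\big)$, which in $\lm$ reads $\tfrac1t(g_t-1)\hat v$ with $\hat v:=\U v$. This converges pointwise to $\i\, h^{1/\al}\hat v$, and the set of $\hat v$ for which it converges in $\lm$ is precisely $\{\hat v \mid h^{1/\al}\hat v\in\lm\}=\U\big(D(A^{1/\al})\big)$; the limit is then $\i\,h^{1/\al}\hat v$, so that after transporting back the generator is $\i A^{1/\al}$ with domain $D(A^{1/\al})$. Equivalently one invokes Stone's Theorem: $A^{1/\al}$ being self-adjoint, $\{\rme^{\i t A^{1/\al}}\}_{t\in\R}$ is automatically a strongly continuous unitary group whose infinitesimal generator is $\i A^{1/\al}$. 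The only point genuinely requiring care is this last domain identification, namely that the generator is \emph{exactly} $\i A^{1/\al}$ rather than a proper restriction or extension; this is the substance of Stone's Theorem and rests entirely on the domain equality $\U\big(D(A^{1/\al})\big)=\{h^{1/\al}\psi\in\lm\}$ secured in the first step.
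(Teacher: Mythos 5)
Your proof is correct and follows essentially the same route as the paper: both pass to the multiplication-operator form of the Spectral Theorem, read off unitarity and the group law pointwise, get strong continuity by dominated convergence, and identify the generator via the difference quotient $\frac{1}{t}(\rme^{\i t h^{1/\al}}-1)\hat v$ with another dominated convergence argument (the paper phrases this last step with the projection-valued measure $\mu^A_v$ rather than in $\lm$, which is only a change of representation). If anything you are slightly more careful than the paper on the final point, since the paper only checks that the generator \emph{extends} $\i A^{1/\al}$ on $D(A^{1/\al})$, whereas you address the exact domain identification via Stone's Theorem.
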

%
%
%
Above, $A^{1/\al}$ is defined through \emph{unbounded functional calculus}. 
More precisely, denoting by $\L(\ld)$ the space of linear and continuous maps from $\ld$ into itself, \Th 10.4 in \cite{Hall} states that there exists a unique 
\emph{projection-valued measure} $\mu^A$ on $[0, \ii)$ with values in 
$\L(\ld)$ 
\st
$$
     \int_0^\ii \la \dif\mu^A = A.
$$
Then
$$
      A^{1/\al} := \int_0^\ii \la^{1/\al} \dif\mu^A.
$$
The domain of $A^{1/\al}$ is
\begin{equation} \label{4.32}
      	D(A^{1/\al}) := \big\{ v\in\ld \ | \ 
        \int_0^\ii \la^{2/\al} \dif\mu^A_v(\la)<\ii \big\},
\end{equation}
where $\mu^A_v$ is the Borel measure on $[0, \ii)$ defined for each Borel set $E\subseteq [0, \ii)$ by
\begin{equation} \label{4.33}
      	\mu^A_v(E)= \big(v,\mu^A(E)v  )_{\ld}.
\end{equation}
We recall that $\mu^A(E)$ is an orthogonal projection of $\ld$. 
%
%
\begin{proof}[Proof of \Th \ref{t4.2}] Recalling that 
$f_t(x):={\rm e}^{\i t x^{1/\al}}$, it is clear from \eqref{4.16} that $S_\al(t)$ is bounded on $\ld$. Also, $S_\al(0)$ is equal to $Id_{\ld}$, the identity operator 
of $\ld$. Moreover, for each real numbers $t$, $s$, the properties of the 
functional calculus featured in \cite[\Th VIII.5]{RS} lead to 
$$
     S_\al(t)S_\al(s)= f_t(A)f_s(A)=f_{t+s}(A) = S_\al(t+s).
$$

Let us show that $S_\al(t)$ is unitary. For, again by functional calculus, we have
\begin{align*}  
    S_\al(t)S_\al(t)^* =  f_t(A)\overline{f_t}(A) & = |f_t|^2(A)\\
    & = Id_{\ld},
\end{align*}
since $|f_t(x)|=1$ for $x\ge 0$. In the same way, there holds 
$S_\al(t)^*S_\al(t)= Id_{\ld}$, so that $S_\al(t)$ is unitary.

Furthermore, by \eqref{4.12} and \eqref{4.16}, for each $v$ in $\ld$,
\begin{align*}  
      	\n{S_\al(t)v - v}^2_{\ld} &= \n{M_{f_t\circ h}\hat{v}- \hat{v} }^2_{\lm}\\
        &= \int_X \big| {\rm e}^{\i t h(\xi)^{1/\al}} - 1 \big|^2 |\hat{v}(\xi)|^2
                                                                  \dif\mu(\xi)
        \xrightarrow[t\to 0]{}0,                                                  
\end{align*}
by the Lebesgue dominated convergence \th\rn. Hence, $S_\al$ is strongly continuous.

Finally, let us compute its infinitesimal generator. According to 
\cite[\Th VIII.6]{RS}, there holds
$$
     S_\al(t) = \int_0^\ii f_t(\la) \dif\mu^A.
$$
Hence, for each $v$ in $D(A^{1/\al})$, \Th 5.9 in \cite[Chap 5]{Sch} leads to
$$
     \Big\| \f{S_\al(t)v - v}{t} - \i A^{1/\al}v \Big\|^2_{\ld} = 
     \int_0^\ii \Big| \f{{\rm e}^{\i t \la^{1/\al}} - 1}{t} - \i \la^{1/\al}\Big|^2
     \dif\mu^A_v(\la).
$$
Using the fact that $\la\mapsto \la^{2/\al}$ lies in $L^1(0, \ii; \mu^A_v)$ (by \eqref{4.32}), we obtain  with the dominated convergence \th\rn, that the latter integral goes to zero as $t\to 0$. That is to say $\i A^{1/\al}$ is 
the  infinitesimal generator of $S_\al$. 
\end{proof}
The next result states that the time fractional \pb \eqref{4.2} may be reduced to a time first order PDE. Notice that the latter PDE is obtained formally by raising the differential 
operators involved in \eqref{4.2} to the power $1/\al$.   
%
%
\begin{coro}  \label{c4.1} Let  $\al\in (0, 1)$, $A$ be a positive self adjoint operator on $\ld$ and $v\in D(A^{1/\al})$. Then \eqref{4.2} is equivalent to the following \pb
\begin{equation}\label{4.36}
  \left\{
    \begin{aligned}
  & \mbox{\rm Find }u\in C^1\big(\R, \ld \big)\cap 
                         C\big(\R, D(A^{1/\al}) \big)\t{\st} \\
  & \ddt u = \i A^{1/\al} u,\h  \t{in }C\big(\R, \ld \big)\\
  & u(0) = v.
\end{aligned}\right.
\end{equation}
\end{coro}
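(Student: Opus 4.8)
The plan is to prove that a \fu solves \eqref{4.2} if and only if it coincides with $t\mapsto S_\al(t)v$, and likewise that a \fu solves \eqref{4.36} if and only if it coincides with the same \fu; the two problems then share the single \so $S_\al(\cdot)v$, which is exactly the claimed equivalence. Before anything else I would record that, since $\al\in(0,1)$ forces $1/\al>1$, the spectral description \eqref{4.32} gives $D(A^{1/\al})\subseteq D(A)$, so that the hypothesis $v\in D(A^{1/\al})$ secures $v\in D(A)$ and makes \eqref{4.2} meaningful; \Th \ref{t4.1} then applies and asserts that \eqref{4.2} has the unique \so $u(t)=S_\al(t)v$, with $S_\al$ as in \eqref{4.30}.

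For \eqref{4.36} I would invoke \Th \ref{t4.2}, by which $S_\al$ is a strongly continuous unitary group on $\ld$ with infinitesimal generator $\i A^{1/\al}$, and then call on the standard theory of the abstract Cauchy problem for generators of strongly continuous groups. For existence, since $v$ lies in $D(A^{1/\al})$, the domain of the generator, the orbit $t\mapsto S_\al(t)v$ belongs to $C^1(\R,\ld)\cap C(\R,D(A^{1/\al}))$ and satisfies $\ddt S_\al(t)v=\i A^{1/\al}S_\al(t)v$ together with $S_\al(0)v=v$; hence $S_\al(\cdot)v$ solves \eqref{4.36}. For uniqueness, given any \so $u$ of \eqref{4.36}, the auxiliary \fu $s\mapsto S_\al(t-s)u(s)$ is differentiable on $[0,t]$ with derivative $-\i A^{1/\al}S_\al(t-s)u(s)+S_\al(t-s)\i A^{1/\al}u(s)=0$, hence is constant; comparing the values at $s=0$ and $s=t$ yields $u(t)=S_\al(t)v$.

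Both problems therefore possess exactly one \so, and the two coincide with $S_\al(\cdot)v$, which is the asserted equivalence. The only genuine obstacle is the mismatch between the two \so classes: $C_{\rm b}(\R,D(A))$ for \eqref{4.2} against $C^1(\R,\ld)\cap C(\R,D(A^{1/\al}))$ for \eqref{4.36}. I would settle it by checking that the common candidate $S_\al(\cdot)v$ lives in both. The computation above places it in the class attached to \eqref{4.36}; for the class attached to \eqref{4.2}, unitarity of $S_\al(t)$ and the commutation $AS_\al(t)=S_\al(t)A$ on $D(A)$ (both being functions of $A$) show that $t\mapsto AS_\al(t)v=S_\al(t)Av$ is bounded and continuous, so $S_\al(\cdot)v\in C_{\rm b}(\R,D(A))$. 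With both memberships in hand, the two uniqueness statements force the solution sets of \eqref{4.2} and \eqref{4.36} to be identical.
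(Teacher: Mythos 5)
Your proof is correct and follows essentially the same route as the paper: identify the unique solution of \eqref{4.2} via Theorem \ref{t4.1}, recognize it as the orbit of the group generated by $\i A^{1/\al}$ via Theorem \ref{t4.2}, and conclude by uniqueness for the first-order abstract Cauchy problem. The only differences are that you spell out the standard uniqueness argument (the auxiliary function $s\mapsto S_\al(t-s)u(s)$) where the paper simply cites a reference, and you explicitly verify the inclusion $D(A^{1/\al})\subseteq D(A)$ needed for \eqref{4.2} to make sense --- a point the paper leaves implicit.
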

%
%
\begin{proof} If $u$ solves \eqref{4.2} then 
$ u(t) = \exp(\i t A^{1/\al}) v$ by \Th \ref{t4.1}. Then $u$ is \so to  
\eqref{4.36} according to \Th \ref{t4.2}. 
Moreover, functional calculus tells us that 
$A^{1/\al}$ is  self adjoint on $\ld$. Thus (see for instance 
\cite[Proposition 6.5]{Sch}), \eqref{4.36} has a unique \so\rn. Consequently, 
\eqref{4.2} and \eqref{4.36} are equivalent.
\end{proof}

Finally, as an example, let us consider the case
$$
    A:= -\Delta + V,
$$
where $V\in L^\ii(\rd)$ is non negative a.e.\!\! on $\rd$, and $D(A)=H^2(\rd)$. Then, for $\al\in (0, 1)$ and $v$ in $H^2(\rd)$, the time fractional Schrödinger \pb 
$$
       \d u = \i^\al(-\Delta + V)u,\h u(0) = v,
$$
has a unique \so $u$, according to \Th \ref{t4.1}. Moreover, 
\begin{equation} \label{4.42}
      	 u(t) = S_\al(t)v= {\rm e}^{\i t (-\Delta + V)^{1/\al}} v\vp t\in\R.
\end{equation}

The  requirements given in the introduction are satisfied, namely 
(i) the $L^2$-norm of $u(t)$ is conserved, i.e.
$$
     \n{u(t)}_{\ld}=\n{v}_{\ld}\vp t\in\R,
$$
and (ii) $S_\al$ is a group, i.e.
$$
    S_\al(t)S_\al(s)=S_\al(t+s)\vp  t,\, s\in\R.
$$

Also, if $v$ belongs to the domain of $(-\Delta + V)^{1/\al}$ then Corollary \ref{c4.1} yields 
that $u$ given by \eqref{4.42} is the unique \so to
$$
      \ddt u = \i (-\Delta + V)^{1/\al} u,\h   u(0) = v.
$$
\bibliography{biblio.bib}\bibliographystyle{alpha}
\vskip12pt
\end{document}